\newcommand{\A}{\mathcal{A}}
\newcommand{\Mop}{\mathcal{M}} 
\newcommand{\Lop}{\mathcal{L}} 
\newcommand{\inv}[1]{{#1}{\!\!\raisebox{.7ex}{\ensuremath{^{-1}}}}}
\newcommand{\R}{\mathbb{R}} 
\newcommand{\Z}{\mathbb{Z}} %
\newcommand{\W}{\mathcal{W}}
\newcommand{\f}{\bm{f}} 
\newcommand{\p}{\bm{p}}
\newcommand{\x}{\!\times\!}
\newcommand*{\Ones}{\text{\usefont{U}{bbold}{m}{n}1}}
\newcommand{\diag}{\mathrm{diag}}
\newcommand{\rank}{\mathrm{rank}}
\newcommand{\hb}{\rule[2pt]{10pt}{0.5pt}} 
\newlength{\imSz}
\newlength{\imShift}
\newtheorem{proposition}{Proposition}
\newtheorem*{remark}{Remark}
\newcommand{\email}[1]{\protect\href{mailto:#1}{#1}}
\begin{document}
\title{MultiResolution Low-Rank Regularization of Dynamic Imaging Problems}
\author{Tommi Heikkilä%
\thanks{%
LUT University, School of Engineering Science, Finland}\\[5pt]%
\href{https://orcid.org/0000-0001-5505-8136}{ORCID: 0000-0001-5505-8136}\\
\email{tommi.heikkila@lut.fi}%
}

\maketitle

\begin{abstract}
MultiResolution Low-Rank decomposition is formulated for regularization of dynamic image sequences. The decomposition applies a local low-rank decomposition on a sequence of discrete wavelet transforms. Its effective formulation as a regularization functional is discussed and numerically tested for dynamic X-ray tomography in comparison to other low-rank methods. The results suggest it is similar to traditional locally low-rank decomposition but produces less severe block artifacts.
\end{abstract} 

\bigskip

\section{Introduction}

In dynamic imaging problems such as magnetic resonance imaging (MRI) and computed tomography (CT) of moving targets it is often better to drastically shorten the measurement time since noisy and undersampled data is easier to correct than the underlying unknown movement. Over the years many \emph{low-rank methods} have been proposed and shown to be effective in resolving the static components of sequential data \cite{haldar2010spatiotemporal,zhao2010low,gao2011robust,tremoulheac2014dynamic}. If the imaging problem at hand is linear, the slowly changing and static parts of the image sequence can be reconstructed independently and more robustly. This also lowers the complexity of reconstructing the dynamic components and adds robustness since a rough outline of the object of interest is already provided, assuming the changes are not too drastic.

Local low-rank (LLR) methods consider only small patches of the images at once and try to find low-rank approximations for the smaller regions. Originally introduced for dynamic MRI \cite{trzasko2011local}, the method has also been used in magnetic resonance fingerprinting (MRF) \cite{lima2019sparsity}. Many of these low-rank methods also include sparsity promoting terms, either at the same time \cite{tremoulheac2014dynamic,lima2019sparsity} or in separate components \cite{gao2011robust,otazo2015low,ravishankar2017low}. See also \cite{kazantsev20154d} for different patch based temporal regularization method.

Recently many low-rank representations (LRRs) have also been developed in the context of tensors \cite{kilmer2011factorization,du2022enhanced,liu2025dynamic}, in order to extend the familiar tools and ideas from matrices (i.e. 2-dimensional arrays) to higher dimensional objects. This way vital underlying structure of the data is still preserved in dynamic (location and time), hyperspectral (location and frequency) and phase-space (location and momentum) applications, just to name a few. Furthermore in data processing and machine learning, extracting some lower dimensional subspace or manifold from hugely large and complicated data sets is often helpful in keeping the computations and storage manageable. In this work the perspective is limited only to the traditional low-rank matrix factorizations (namely the singular value decomposition (SVD)), but there is no inherent reason why the proposed method could not be extended to tensors as in \cite{liu2023image,liu2025dynamic}.

In this work a locally low-rank decomposition is formulated by applying the patched low-rank decomposition on the wavelet domain rather than the usual spatial domain. This idea has already been used in (hyperspectral) image denoising and deblurring \cite{palsson2014hyperspectral,zhao2021wavelet}, where it evolved from local wavelet coefficient thresholding methods. Alternate approach of wavelet-denoising the principal components has also been proposed even earlier \cite{chen2010denoising}.

Thanks to the multiscale nature of the wavelet decomposition, the low-rank decomposition will also cover multiple scales. Moreover, due to the smooth and overlapping wavelet elements, the patches should have less distinct boundary artifacts. We show that this multiresolution low-rank (MRLR) decomposition is unitary and briefly showcase its use as a regularization term with numerical examples in dynamic X-ray tomography application.

\section{Low-rank methods}
We consider a sequence of time-dependent inverse problems  of the form
\begin{equation} \label{eq:IPs_for_t}
    \A_t f_t = m_t + \varepsilon_t, \ \text{for} \ t = 1,2,\dots,T.
\end{equation}
Here each $f_t \in \R^N$ is a $N_r \times N_c$ image flattened into vector with $N = N_r N_c$ values in total. Extension to 3D volumes would follow in similar fashion. For simplicity, we denote the value at $i$'th row and $j$'th column of the image by $f_t[i, j]$, even though formally $f_t$ is a one dimensional vector.

We can stack each problem in \cref{eq:IPs_for_t} by denoting $\f = \big( f_t \big)_{t=1}^T$, $\bm{m} + \bm{\varepsilon} = \big( m_t + \varepsilon_t \big)_{t=1}^T$ and $\bm{\A} = \diag\big( \A_1, \dots , \A_T\big)$ the block diagonal operator. Then we can write \eqref{eq:IPs_for_t} as $\bm{\A} \f = \bm{m} + \bm{\varepsilon}$ andthe least squares term is then
\begin{align*}
    \big\| \bm{\A} \f - \bm{m} \big\|^2_2 = \sum_{t=1}^T \big\| \A_t f_t - m_t \big\|^2_2.
\end{align*}

Since each time step $t$ is currently independent, we regularize the problem by requiring that the evolution of $\bm{f}$ over time must have a low-rank structure.

We form the \emph{Casorati matrix} \cite{haldar2010spatiotemporal} using the associated operator $\Mop$ given as
\begin{align} \label{eq:casorati}
    \Mop : \bm{f} &\longmapsto F =\begin{bmatrix}
        f_1 & f_2 & \dots  & f_T
    \end{bmatrix} = \begin{bmatrix}
        f_1[1,1] & f_2[1,1] & \dots  & f_T[1,1] \\
        f_1[2,1] & f_2[2,1] & \dots  & f_T[2,1] \\
         \vdots  &  \vdots  & \ddots &  \vdots  \\
        f_1[N_r, N_c] & f_2[N_r, N_c] & \dots & f_T[N_r, N_c] \\
    \end{bmatrix}, 
\end{align}
where $F \in \R^{N \times T}$. 
Since this is simply a reordering of the terms, it can be reverted with
\begin{align}
     \inv{\Mop} : F & \longmapsto \f.   
\end{align}
We note also that for any $\f, \bm{g} \in \R^{NT}$ we have
\begin{align}
    \langle \f, \bm{g} \rangle &= \langle \Mop \f, \Mop \bm{g}  \rangle_F, \label{eq:frobenius_identity}
\end{align}
where $\langle \cdot, \cdot \rangle_F$ is the Frobenius inner product.

We could attempt penalizing the rank of the matrix $F = \Mop \f$, but this corresponds to minimizing the $\ell^0$-"norm" of the singular values. The usual remedy from compressed sensing and other sparsity promoting regularization methods is to use the convex $\ell^1$-norm instead. This leads to penalizing the \emph{nuclear norm} of the matrix $F$:
\begin{align}
    \big\| F \big\|_* = \sum_{i=1}^r \big| \sigma(F)_i \big| = \big\| \sigma(F) \big\|_1,
\end{align}
where $r = \rank(F) \leq \min\lbrace N, T \rbrace$ and $\sigma(F) = \diag(\Sigma) \in \R^r$ denotes the singular values of $F$ given by the singular value decomposition (SVD):
\begin{align} \label{eq:SVD}
    F = U \Sigma V^T = \begin{bmatrix}
        | & | &  & | \\
        u_1 & u_2 & \dots & u_r \\
        | & | &  & |
    \end{bmatrix} 
    \begin{bmatrix}
        \sigma(F)_1 & \\
        & \ddots & \\
        & & \sigma(F)_r
    \end{bmatrix}
    \begin{bmatrix}
        \hb & v_1^T & \hb \\
        \hb & v_2^T & \hb \\
         & \vdots & \\
        \hb & v_r^T & \hb
    \end{bmatrix}.
\end{align}

Hence the global low-rank (GLR) method is given by minimizing the functional
\begin{align} \label{eq:GLR}
    {GLR} \big( \f \big) = \big\| \bm{\A} \f - \bm{m} \big\|^2_2 + \lambda \left\| \Mop \f  \right\|_*,
\end{align}
where $\lambda > 0$ is the regularization parameter.

In order to minimize \cref{eq:GLR} effectively, we need to analyze the low-rank operator properly. Let $\diag$ denote the linear operator which takes the diagonal values from any matrix. For rectangular $m \times n$ matrices, if $m > n$, the output is zero-padded to be in $\R^m$. Formally we can write $\diag(M) = (M \odot I) \Ones$, where $\odot$ is the Hadamard product (elementwise product of matrix elements) and $\Ones \in \R^r$ a column vector of all ones. Finally for any $y \in \R^r$ we can also define $\diag^*(y) = (y \Ones^T) \odot I$, which gives a diagonal matrix with elements of $y$ on the diagonal.

Now consider
\begin{align} \label{eq:opL}
    \Lop : \f \longmapsto \diag \big( U^T \Mop ( \bm{f} ) V \big).
\end{align}
By definition $\Lop$ extracts the diagonal matrix $\Sigma$ from the SVD of $\Mop \f$ and we obtain a vector of $r$ singular values with the $\diag$ operator. It is easy to verify that $\Lop: \R^{NT} \rightarrow \R^r$ is a linear operator, but note that it only yields the singular values of the specific vector $\f$ since the unitary matrices $U$ and $V$ depend on the SVD of $\f$. However, we have the following result independent of the particular $\f \in \R^{NT}$.

\begin{proposition} \label{prop:L_is_unitary}
The operator $\Lop$ as defined in \eqref{eq:opL} is unitary and for any $y \in \R^r$
\begin{align} \label{eq:Ladj}
    \Lop^* y = \inv{\Mop}\left( U \diag^*(y) V^T \right).
\end{align}
\end{proposition}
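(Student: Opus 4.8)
The plan is to verify the adjoint formula \eqref{eq:Ladj} directly from the defining relation $\langle \Lop \f, y \rangle = \langle \f, \Lop^* y \rangle$, required for all $\f \in \R^{NT}$ and $y \in \R^r$, and then read off unitarity as an immediate corollary. Throughout, $U \in \R^{N \times r}$ and $V \in \R^{T \times r}$ are the SVD factors of the particular $\f$ that is fixed in the definition of $\Lop$; they are held constant, so $\Lop$ is genuinely the linear map $\bm g \mapsto \diag\big( U^T \Mop(\bm g) V \big)$ and the nonlinearity of the SVD plays no further role. The three ingredients I would assemble are: (i) $\diag^*$ is the honest adjoint of $\diag$; (ii) the standard transpose manipulation $\langle A X B, Y \rangle_F = \langle X, A^T Y B^T \rangle_F$ of the Frobenius inner product; and (iii) the isometry identity \eqref{eq:frobenius_identity} for $\Mop$, together with the fact that $\Mop$ maps $\R^{NT}$ bijectively onto $\R^{N \times T}$.

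For (i), from $\diag(M) = (M \odot I)\Ones$ and $\diag^*(y) = (y\Ones^T) \odot I$ one reads off $\big(\diag^*(y)\big)_{ij} = y_i \delta_{ij}$, hence $\langle \diag(M), y \rangle = \sum_i M_{ii} y_i = \langle M, \diag^*(y) \rangle_F$ for every $r \times r$ matrix $M$; note that the zero-padding clause in the definition of $\diag$ never activates here, since $U^T \Mop(\bm g) V$ is exactly $r \times r$, which keeps the bookkeeping clean. The core of the argument is then the chain
\begin{align*}
    \langle \Lop \f, y \rangle
    &= \big\langle \diag\big( U^T \Mop(\f) V \big),\, y \big\rangle
     = \big\langle U^T \Mop(\f) V,\, \diag^*(y) \big\rangle_F \\
    &= \big\langle \Mop(\f),\, U \diag^*(y) V^T \big\rangle_F
     = \big\langle \f,\, \inv{\Mop}\big( U \diag^*(y) V^T \big) \big\rangle ,
\end{align*}
where the second step is (i), the third is (ii), and the last is \eqref{eq:frobenius_identity} applied to $\f$ and $\bm g := \inv{\Mop}\big( U \diag^*(y) V^T \big)$ — legitimate precisely because $U \diag^*(y) V^T \in \R^{N \times T}$ lies in the range of $\Mop$. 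Since this holds for all $\f$, it is exactly \eqref{eq:Ladj}.

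For unitarity I would then simply compose $\Lop$ with the formula just obtained. Using that the thin-SVD factors have orthonormal columns, $U^T U = V^T V = I_r$, one gets
\begin{align*}
    \Lop \Lop^* y
    &= \diag\Big( U^T \Mop\big( \inv{\Mop}( U \diag^*(y) V^T ) \big) V \Big) \\
    &= \diag\big( U^T U \, \diag^*(y) \, V^T V \big)
     = \diag\big( \diag^*(y) \big) = y ,
\end{align*}
so $\Lop \Lop^* = \mathrm{Id}_{\R^r}$; in particular $\| \Lop^* y \| = \| y \|$, i.e. $\Lop^*$ is an isometry and $\Lop$ a co-isometry, which is the sense in which $\Lop$ is ``unitary'' here (a genuine two-sided unitary is impossible whenever $r < NT$, the generic low-rank situation, and one could add that $\Lop^* \Lop$ is then the orthogonal projection onto $\mathrm{range}(\Lop^*)$ if a norm-preservation statement for $\Lop$ on that subspace is wanted).

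I do not expect a real obstacle: the whole thing is a short computation. The only care needed is tracking which of the three inner products — on $\R^{NT}$, on $\R^r$, or the Frobenius product on matrices — is in force at each step, and making explicit that $U$ and $V$ are the frozen SVD factors rather than functions of the running argument. The single structural fact actually doing work in the unitarity step is the column-orthonormality $U^T U = V^T V = I_r$, so I would flag it prominently.
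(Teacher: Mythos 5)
Your derivation of the adjoint formula is exactly the paper's: the same chain through the adjoint pair $\diag$/$\diag^*$, the Frobenius transpose identity, and the isometry \eqref{eq:frobenius_identity}, so that part matches step for step. Where you genuinely diverge is on unitarity. The paper verifies \emph{both} $\Lop \Lop^* y = y$ and $\Lop^* \Lop \bm{x} = \bm{x}$, the latter via
\begin{align*}
    \Lop^* \Lop \bm{x} = \inv{\Mop}\big( U \diag^*\big(\diag( U^T \Mop(\bm{x}) V )\big) V^T \big) = \inv{\Mop}\big( U U^T \Mop(\bm{x}) V V^T \big) = \bm{x},
\end{align*}
whereas you stop at $\Lop \Lop^* = \mathrm{Id}_{\R^r}$ and argue that $\Lop$ can only be a co-isometry. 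You are right to be cautious: the paper's first equality above silently uses $\diag^*(\diag(M)) = M$, which holds only when $M$ is diagonal --- that is, only when $\bm{x}$ is essentially the particular $\f$ whose SVD supplied $U$ and $V$ --- and the second uses $UU^T = I_N$ and $VV^T = I_T$, which fail for reduced SVD factors whenever $r < N$ or $r < T$. For a general argument $\bm{x}$, $\Lop^* \Lop$ is the orthogonal projection onto the $r$-dimensional subspace $\inv{\Mop}\lbrace U D V^T : D \ \text{diagonal} \rbrace$, exactly as you note, and since $\Lop$ maps $\R^{NT}$ onto $\R^r$ with $r \leq \min\lbrace N, T\rbrace < NT$ it cannot be injective, so ``unitary'' can only be read as ``co-isometric.'' Your version therefore buys correctness on this point at the cost of proving a slightly weaker statement than the one literally claimed; the paper's version buys the headline claim at the cost of a step that does not survive scrutiny for general $\bm{x}$. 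What the regularization identity actually requires is only $\|\Lop \f\|_1 = \|\Mop \f\|_*$ for the specific $\f$ defining $U$ and $V$, and that is fully secured by your computation.
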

\begin{proof}
The result follows easily by considering the Frobenius inner product, for which $\langle A B, C \rangle_F = \langle B, A^T C \rangle_F$ for any compatible sized matrices $A, B$ and $C$.

Let $\bm{x} = \big( x_t \big)_{t=1}^T \in \R^{NT}$ and $y\in \R^r$ be arbitrary and consider
\begin{align*}
    \langle \Lop \bm{x}, y \rangle &= \langle \diag \big( U^T \Mop (\bm{x}) V \big), y \rangle
    =  \langle  U^T \Mop (\bm{x}) V, \diag^* (y) \rangle_F \\
    &= \langle \Mop (\bm{x}) , U \diag^* (y) V^T \rangle_F 
    = \langle \bm{x}, \inv{\Mop} \big( U \diag^*(y) V^T \big) \rangle
\end{align*}
Therefore \cref{eq:Ladj} holds. Then we can check that
\begin{align*}
    \Lop \Lop^* y &=  \diag \Big( U^T \Mop \big( \inv{\Mop} ( U \diag^*(y) V^T ) \big) V \Big) \\
    &= \diag \big( U^T U \diag^*(y) V^T  V \big) = \big( (y \Ones^T) \odot I \odot I \big) \Ones = y,
\end{align*}
and
\begin{align*}
    \Lop^* \Lop \bm{x} &= \inv{\Mop} \big( U \diag^* \big( \diag( U^T \Mop ( \bm{x} ) V ) \big) V^T \big) \\
    &= \inv{\Mop} \big( U U^T \Mop ( \bm{x} ) V V^T \big) = \inv{\Mop} \big( \Mop ( \bm{x} ) \big) = \bm{x}.
\end{align*}
This concludes the proof.
\end{proof}

Proposition~\ref{prop:L_is_unitary} gives an alternative definition to \cref{eq:GLR} as
\begin{align} \label{eq:GLR_alt}
    \textit{GLR} \big( \f \big) = \big\| \bm{\A} \f - \bm{m} \big\|^2_2 + \lambda \left\| \Lop \f  \right\|_1,
\end{align}
which can be minimized with relatively standard methods.

\begin{remark}
An easier way of defining the $\diag$-operator would be to simply use a vector of ones $\Ones \in \R^r$. For diagonal matrices the operations $\diag(\Sigma)$ and $\Sigma \Ones$ would give identical results, but the adjoints would be different since $\diag^*(\diag(\Sigma)) = \Sigma$, but $\Sigma \Ones \Ones^T \neq \Sigma$ and $\Lop$ would not be unitary but $r$-times redundant.
\end{remark}

\subsection{Locally low-rank methods}
By construction the \emph{global low-rank} method in \cref{eq:GLR}  can only provide a very coarse approximation of the image sequence $\bm{f}$ since each principal component is of same size as $f_t$. And since the SVD is limited by the dimensions $N$ and $T$ (whichever is smaller), the number of components may be very limited, even when $\bm{f}$ has a very high spatial resolution or is a 3D volume, for example.

One way to overcome these limitations is to divide each $f_t$ into $K$ patches of size $p \times p$ (or more generally size $p_1 \times \dots \times p_d$ in $d$ dimensions) \cite{trzasko2011local}. We assume the patches are \emph{non-overlapping}. We can then consider a sequence of $K$ patched Casorati matrices, independently for each patch, as
\begin{align}
    \Mop_{p \times p} : \bm{f} &\longmapsto \left( F_k \right)_{k=1}^K, 
\end{align}
where for example, the first patch is 
\begin{align}
     F_1 &= \begin{bmatrix}
         f_1[1,1] & f_2[1,1] & \dots & f_T[1,1] \\
          \vdots  &  \vdots  & \ddots &  \vdots \\
         f_1[p,1] & f_2[p,1] & \dots & f_T[p,1] \\
         f_1[1,2] & f_2[1,2] & \dots & f_T[1,2] \\
          \vdots  &  \vdots  & \ddots &  \vdots \\
         f_1[p,p] & f_2[p,p] & \dots & f_T[p,p]
     \end{bmatrix}   
\end{align}
and so on. This process is also illustrated in \cref{fig:casoratis}.

\begin{figure}[bth]
    \centering
    \begin{tikzpicture}[scale=1]
    
    \begin{scope}[scale=0.5] 
    \foreach \x/\w/\s/\b in {4.5cm/50/.5cm/80, 4cm/30/0cm/65, 3.5cm/10/.5cm/50}{
        \filldraw[gray!\w!white, draw=black] (\x,\x) rectangle +(6cm,-6cm);
        \draw[step=1cm, xshift=\s, yshift=\s] ($(\x-\s,\x-\s)$) grid +(6cm,-6cm);
        \draw[very thick, red!\b!gray] (\x,\x) rectangle +(2cm,-2cm); 
        \draw[very thick, purple!\b!gray] ($(\x,\x-2cm)$) rectangle +(2cm,-2cm); 
        \draw[very thick, blue!\b!gray] ($(\x+4cm,\x-4cm)$) rectangle +(2cm,-2cm); 
    } 
    \draw[decorate, decoration = {calligraphic brace}, thick] (3.3,1.5) -- +(0,2) node[left, pos=0.5] {$p$};
    \draw[decorate, decoration = {calligraphic brace}, thick] (3.5,3.7) -- +(2,0) node[above=2pt, pos=0.5, fill=white, fill opacity=.7, text opacity=1, inner sep=1pt] {$p$};
    \node at (7cm,5.25cm) {$f_t, \ t = 1,\dots,T$};
    \end{scope}

    \draw[very thick, ->, >=stealth] (5.8,0.7) -- +(1.5,0) node[above, pos=.5] {$\Mop_{p \times p}$};

    \begin{scope}[xshift=7.5cm, yshift=0.5cm, scale=0.5] 
    \foreach \i/\j/\col/\s/\t/\k in {2.6cm/3.4cm/blue/-0.4cm/0.4cm/K, 1.8cm/2.2cm/purple/-0.2cm/0.2cm/2, 1cm/1cm/red/0cm/0cm/1}{ 
        \foreach \x/\w in {0cm/10, 1cm/30, 2cm/50}{ 
            \fill[gray!\w!white] ($(\x + \i, \j)$) rectangle +(1cm,-4cm);
        }
        \draw[step=1cm, xshift=\s, yshift=\t] ($(\i - \s, \j - \t)$) grid +(3cm,-4cm);
        \draw[very thick, \col!80!gray] ($(\i, \j)$) rectangle +(3cm,-4cm); 
    }
    \node at (3cm,4cm) {$F_k, \ k = 1, \dots, K$};
    \draw[decorate, decoration = {calligraphic brace}, thick] (0.8cm,-3.0cm) -- +(0,4cm) node[left, pos=0.5] {$p^2$};
    \draw[decorate, decoration = {calligraphic brace}, thick] (4cm,-3.2cm) -- +(-3cm,0) node[below, pos=0.5] {$T$};
    \end{scope}
    \end{tikzpicture}
    \caption{Illustration how the patched Casorati matrices are formed.}
    \label{fig:casoratis}
\end{figure}
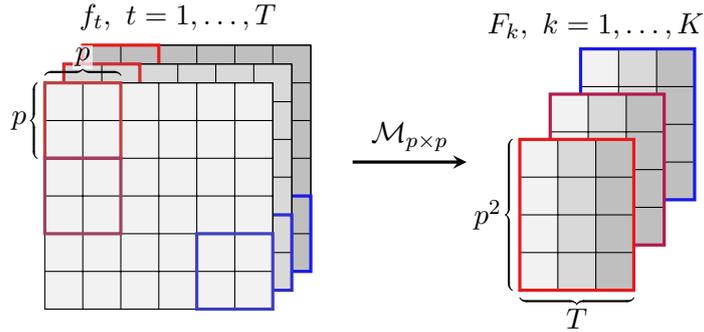

Hence the local low-rank (LLR) method is given by

\begin{align} \label{eq:LLR}
    \textit{LLR} \big( \bm{f} \big) = \big\| \bm{\A} \bm{f} - \bm{m} \big\|^2_2 + \lambda \sum_{k=1}^K \left\| \big( \Mop_{p \times p} \bm{f} \big)_k \right\|_*,
\end{align}
where $\lambda > 0$ is the regularization parameter. Defining a patched variant $\Lop_{p}$ of the unitary operator $\Lop$ would be straight forward (with either square or more general rectangular patches) but the construction is omitted here.

Changing the patch sizes gives a lot of control over the regions where each low-rank approximation is carried out. Notice also that the patches can have different sizes as long as each patch is present in every time step $f_t$ since the patches are separated in to the columns of matrices $F_k$ in the sequence, and $F_k$ and $F_l$ need not have the same size for $k \neq l$. Let $\p(k)$ denote the size of the $k$'th patch from now on.

However, this simple separation suffers from distinct artifacts at the boundaries of the patches since each is handled independently of the neighboring patches. This can be partly resolved with \emph{overlapping} patches, but this makes the associated operator $\Mop$ more complicated and then the minimization of \cref{eq:LLR} can become more difficult. Some overlapping patching strategies have been proposed, in particular for dynamic MRI \cite{candes2013unbiased,meyer2023locally}.

\section{Multiresolution low-rank method}
In the proposed \emph{multiresolution low-rank} method the (patched) Casorati matrix is not formed from the image sequence $\f = (f_t)_{t=1}^T$ but from the corresponding 2D \emph{wavelet decompositions} $(\W f_t)_{t=1}^T$. The coefficients from the wavelet decomposition retain some of the local spatial information from $\f$, but the smoothness of $\f$ is no longer tied to the values of neighboring matrix elements but the underlying wavelet system. In other words, even though the patched Casorati matrices split the input into independent, non-overlapping blocks, once the wavelet coefficients are reconstructed (via the inverse wavelet transform $\W^{-1} = \W^*$), the end result should not show sharp artifacts at the edges of the patches since the wavelet elements $\psi_{j,k}$ can overlap with nearby wavelets.

Moreover, since the wavelet transform covers multiple spatial scales, the local low-rank approximation of said transform will also cover multiple scales. Since we are free to change the patch size for the different decomposition levels (and directional subbands, but we omit this), this gives more control than the direct low-rank approximations considered earlier.

Formally we consider the 2D discrete wavelet transform $\W$ and write
\begin{align}
    \W : f &\longmapsto \left( c_j \right)_{j=0}^J = \bm{c},
\end{align}
where the scale $j=0$ corresponds to the scaling coefficients and the scales $j \geqslant 1$ the detail coefficients. The different translates $\ell = (l_1, l_2) \in \Z^2$ are given as
\begin{align}
    c_j[\ell, d] = \langle f, \psi^{(d)}_{j,\ell} \rangle = \langle f, 2^{j} \psi^{(d)}(2^j \cdot - \ell) \rangle,
\end{align}
where the different directional wavelets are constructed from 1D wavelet and scaling functions in the following manner:
\begin{align*}
    \text{horizontal:} \ \psi^{HL}(x_1, x_2) &= \psi(x_1)\phi(x_2), \\
    \text{vertical:} \ \psi^{LH}(x_1, x_2) &= \phi(x_1)\psi(x_2), \\
    \text{diagonal:} \ \psi^{HL}(x_1, x_2) &= \psi(x_1)\psi(x_2).
\end{align*}
The 2D scaling function is denoted $\psi^{LL}(x_1, x_2) = \phi(x_1)\phi(x_2)$.

The multiresolution low-rank (MRLR) method is given by
\begin{align} \label{eq:MRLR_nuclear}
    {M\!RLR} \big( \bm{f} \big) &= \big\| \bm{\A} \bm{f} - \bm{m} \big\|^2_2 + \lambda \sum_{j=0}^J \sum_{k=1}^{K_j} \left\| \big( \Mop_{\p(j,k)} \W \bm{f} \big)_{k} \right\|_*,
\end{align}
where at every scale $j$, the wavelet subbands are broken in to $K_j$ local patches of size $\p_j(k) = p^{j,k}_1 \times p^{j,k}_2$. Again the patch sizes could vary, but for simplicity let us consider square $p_j \times p_j$ patches chosen solely based on the scale $j$. Unlike \cite{liu2023image}, the different directional subbands are kept in separate Casorati matrix patches (accounted by the index $k$).

This regularization term can be written as
\begin{align} \label{eq:MRLR_op}
    \textit{MRLR} \big( \bm{f} \big) &= \big\| \bm{\A} \bm{f} - \bm{m} \big\|^2_2 + \lambda  \left\| \Lop_{\W, \p} \bm{f} \right\|_1.
\end{align}
Here we define
\begin{align*}
    \Lop_{\W, \p} : (f_t)_{t=1}^T &\longmapsto \begin{bmatrix} \Lop_{\p(0)} (\W \bm{f})_0 \\
    \Lop_{\p(1)} (\W \bm{f})_1 \\
    \vdots \\
    \Lop_{\p(J)} (\W \bm{f})_J
    \end{bmatrix},
\end{align*}
where each $\Lop_{\p(j)} (\W \f)_j$ itself is a local low-rank decomposition in to $K_j$ vectors of singular values.

Since the wavelet transform corresponding to orthogonal wavelets is unitary, the operator $\Lop_{\W, \p}$ is also unitary. The process is illustrated in \cref{fig:mr-lr_scheme}.

\begin{figure}[bth]
    \centering
    \begin{tikzpicture}[scale=1]
    
    \begin{scope}[scale=0.5] 
    \foreach \x/\w/\o in {6cm/50/1, 3cm/30/1, 0cm/10/1}{ %
        \filldraw[gray!\w!white, draw=black, opacity=\o] ($(0.15*\x,\x)$) rectangle +(4cm,-4cm);
        
        \draw[very thick, ->, >=stealth, gray!\w!black] ($(0.15*\x + 5cm,\x - 2cm)$) -- +(1.2,0) node[above, pos=.5] {$\W$};

        \begin{scope}[xshift={0.15*\x + 8cm}, yshift={\x + 0.5}]
        \foreach \j/\decSz/\col in {1/20mm/red, 2/10mm/purple, 3/5mm/blue}{ 
        \foreach \k/\l in {0/1, 1/1, 1/0}{ 
            \filldraw[gray!\w!white, draw=white!\w!\col, opacity=\o] ($1.05*(\k*\decSz,-\l*\decSz)$) rectangle +($0.9*(\decSz,-\decSz)$);
            \if\j1
                \draw[dashed, step=10mm, draw=white!\w!\col, opacity=\o] ($1.05*(\k*\decSz,-\l*\decSz)$) grid +($0.9*(\decSz,-\decSz)$);
            \fi
        }
        }
        \filldraw[gray!\w!white, draw=cyan, opacity=\o] (0,0) rectangle +($(5mm,-5mm)$);
        \end{scope}
    } 
    \node at (2.8cm,6.8cm) {$f_t, \ t = 1,\dots,T$};
    \node at (10.8cm,6.8cm) {$\W f_t, \ t = 1,\dots,T$};

    \draw[decorate, decoration = {calligraphic brace}, thick] (12.1cm,-3.0cm) -- +(0,-9mm) node[right, pos=0.5] {\tiny $p_{\!j}$};
    \draw[decorate, decoration = {calligraphic brace}, thick] (11.9cm,-4.0cm) -- +(-9mm,0) node[below, pos=0.5] {\tiny $p_{\!j}$};
    \end{scope}

    \begin{scope}[xshift=10cm, yshift=-1.2cm] 
    \foreach \col/\num/\y/\j in {red/6/1cm/3, purple/3/2cm/2, blue/3/3cm/1, cyan/1/4cm/0}{
        \begin{scope}[yshift={\y}, scale=0.15]
        \foreach \i in {1, ..., \num}{
            \coordinate (p) at ($-\i*(2cm, 1cm)$);
            \ifnum\i=1
                \node[anchor=west] at (1.2cm, -3cm) {\tiny $j=\j$};
            \fi
            \ifnum\ifnum\i<3 1\else\if\i\num 1\else 0\fi\fi=1
                \foreach \x/\w in {0cm/10, 1cm/30, 2cm/50}{ 
                    \fill[gray!\w!white] ($(p) + (\x, 0)$) rectangle +(1cm,-4cm);
                }
                \draw[step=1cm, thin] (p) grid +(3cm,-4cm);
                \draw[thick, \col!80!gray] (p) rectangle +(3cm,-4cm);
            \else\if\i4
                \node[rotate=30] at ($(p) + (1.4cm, -2.5cm)$) {\dots};
                \fi
            \fi
        }
        \end{scope}
    }
    \draw[very thick, ->, >=stealth, black] (-3,1.6) -- +(1.2,0) node[above, pos=.5] {$\Mop_{\p(j)}$} node[below, pos=.5, text width=2.5cm, scale=0.75] {$j = 0,1,\dots,J$; $k = 1,\dots,K_j$};
    \node at (-0.8cm, 4.6cm) {\small $F_{j,k}, \ k = 1, \dots, K_j$};
    \node[anchor=center, font=\small] at (0.65cm, 4.0cm) {scale:};
    \node[anchor=east, text width=2cm, font=\small, text centered, scale=0.75] at (-0.5cm, 3.6cm) {approximation coefficients};
    
    \draw[decorate, decoration = {calligraphic brace}, thick] (-1.9cm,-0.5cm) -- +(0,6mm) node[left, pos=0.5] {\tiny $p_{\!j}^2$};
    \draw[decorate, decoration = {calligraphic brace}, thick] (-1.32cm,-0.6cm) -- +(-5mm,0) node[below, pos=0.5] {\tiny $T$};

    \end{scope}
    
    \end{tikzpicture}
    \caption{Illustration of the multiresolution low-rank scheme using $J=3$ scales. Note how there is a lot of flexibility in choosing the patch sizes for each scale and directional subband.}
    \label{fig:mr-lr_scheme}
\end{figure}
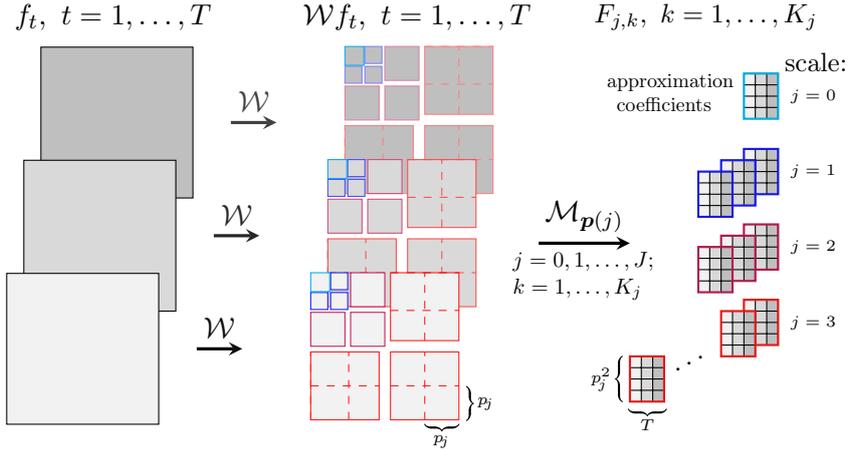

\section{Numerical examples}
To assess the quality and viability of the different low-rank methods the following numerical tests are performed on dynamic X-ray tomography problems.

We consider an ill-posed inverse problem of reconstructing moving object of interest from a single measurement organized into a \emph{sinogram}. Since the underlying movement is unknown, we approximate the problem as a sequence of static \emph{limited angle} problems which are notoriously difficult to reconstruct and often suffer from streaking artifacts.

In addition to the proposed MRLR-method given in \eqref{eq:MRLR_op}, all tested data is also reconstructed using three existing low-rank methods. First the Local Low-Rank (LLR) method \cite{trzasko2011local} as formulated in \eqref{eq:LLR}. Then the Low-rank + Sparse (L+S) method \cite{otazo2015low,gao2011robust}, which is a different kind regularization scheme which decomposes the unknown dynamics into \emph{separate} low-rank and sparse components (using some multiresolution representation system such as wavelets). The objective functional for the L+S method is given as
\begin{equation} \label{eq:L+S}
    \textit{LplusS} \big( L, S \big) = \| \bm{\A}(L + S) - \bm{m} \|_2^2 + \lambda_L \| \Mop L \|_* + \lambda_S \| \W S \|_1,
\end{equation}
where $L,S \in \R^{NT}$, $\Mop$ is the operator defined earlier in \cref{eq:casorati} and $\W$ is again a 2D wavelet transform applied separately to each time step $S_t$ as in \eqref{eq:MRLR_nuclear}. Once again, with a suitable unitary operator, the nuclear norm can be replaced with the $\ell^1$-norm. The two regularization terms have separate regularization parameters $\lambda = (\lambda_L, \lambda_S)$. 
And finally the Fast algorithm for Total Variation and Nuclear Norm Regularization (FTVNNR) \cite{yao2015accelerated,yao2018efficient}. Its objective functional can be given as
\begin{equation}\label{eq:TVNN}
    \textit{FTVNNR} \big( \bm{f} \big) = \| \bm{\A} \bm{f} - \bm{m} \|_2^2 + \lambda_1 \sum_{t=1}^T {TV}(f_t) + \lambda_2 \| \Mop \bm{f} \|_*.
\end{equation}
In short, it incorporates a spatial anisotropic total variation penalty on each time step in addition to the global low-rank penalty term. The implementation is based on the codes available on \url{github.com/uta-smile/FTVNNR_Dynamic_MRI_MEDIA}, and only slightly modified for CT.

In \cite{lima2019sparsity} the authors propose using both locally low-rank and (wavelet) sparsity regularization terms on the unknown image sequence $\bm{f}$ for MRF. This is more closely related to the earlier LLR method in \eqref{eq:LLR} than the L+S method, since the locally low-rank and sparse components are not separated.

The chosen patch sizes and different regularization and wavelet parameters are all listed in table~\ref{tab:parameters}. The LLR and MRLR functionals are minimized using the primal-dual fixed point (PDFP) algorithm \cite{chen2013primal} with non-negativity constraints on the attenuation values. All algorithms are ran until the relative change between iterates drops below $ 5\cdot 10^{-4}$. Daubechies-3 wavelets \cite{daubechies1992ten} are used with every wavelet-based method and data, with periodic boundary conditions to ease the choice of subband patch sizes. The Matlab implementation of the L+S algorithm is based on the example code from \cite{heikkila2022stempoData}. The repository with all the Matlab codes is available on \url{github.com/tommheik/WaveletLowRank}.

\begin{table}[ht!]
    \centering
    \hspace*{-3em}%
    \begin{tabular}{clm{0.15\textwidth} m{0.18\textwidth}cc}
        Data & Method & Wavelet transform & Regularization parameters $\lambda$ & No. of patches & Patch sizes \\[1mm] \hline \\[-2mm]
        \multirow{3}{*}{\rotatebox[origin=c]{90}{Simulated}} & MRLR & db3 ($J = 2$) & $1.0$ & \hspace*{-0.1\textwidth}$[4 \x 4, 2 \x 2, 2 \x 2]$ & $[64 \x 64, 64 \x 64, 64 \x 64]$ \\[1mm]
        & LLR & \ - & $0.1$ & $32 \times 32$ & $8 \times 8$ \\[1mm]
        & L+S & db3 ($J = 3$) & $(0.2, 0.08)$ & - & - \\[1mm]
        & FTVNNR & \ - & $(0.001, 5)$ & - & - \\[1mm] \hline \\[-2mm]
        \multirow{3}{*}{\rotatebox[origin=c]{90}{STEMPO}} & MRLR & db3 ($J = 3$) & $1.0$ & \hspace*{-0.1\textwidth}$[4 \x 4, 2 \x 2, 1 \x 1, 1 \x 1]$ & $[35 \x 35, 35 \x 35, 35 \x 35, 35 \x 35]$ \\[1mm]
        & LLR & \ - & $0.1$ & $40 \times 40$ & $7 \times 7$ \\[1mm]
        & L+S & db3 ($J = 3$) & $(0.35, 0.02)$ & - & - \\[1mm]
        & FTVNNR & \ - & $(0.01, 2)$ & - & - \\[1mm]
    \end{tabular}
    \caption{Table of parameters used}
    \label{tab:parameters}
\end{table}
\vspace{-10pt}

\subsection{Data}

Tests are performed both using simulated and real data:
\smallskip

\textbf{Simulated data} consists of multiple objects of different shapes and attenuation. One object is translated from left to right, the rest undergo periodic and affine deformation, and one object is slightly rotated counterclockwise at the same time. The reconstruction resolution is $256 \times 256$, but the data was initially simulated at twice the spatial resolution and white Gaussian noise of 3\% variance was added to avoid inverse crime. A total of 360 projections with $1^\circ$ steps were computed using 180 unique time steps (i.e. 2 projections per time step).
    
For the reconstructions, the data is organized into just 32 batches, each with 19 consecutive projections and overlap of 11 projections. 
The forward operator follows a parallel beam geometry and is implemented using the ASTRA Toolbox \cite{van2015astra,van2016fast}.

32 reference time steps are chosen (out of the original 180) to match the average state of the batches and some of these are shown in \cref{fig:simu}.

\vspace{-5pt}
\begin{figure}[ht!]
    \setlength{\imSz}{3cm}
    \setlength{\imShift}{1.02\imSz}
    \centering
    \begin{tikzpicture}
        \foreach \mtd/\lbl [count=\j] in {ref_obj/Ground truth}{
        \foreach \t [count=\i] in {1, 12, 22, 32}{
            \node[anchor=center] (img) at ($(\i*\imShift, \j*\imShift)$) {\includegraphics[width=\imSz]{images/\mtd_t\t.png}};
            \if \j 1
                \node[anchor=center, above=-1mm of img] {$t = \t$};
            \fi
        }
        \node[anchor=center, rotate=90] at ($(0.43*\imShift, \j*\imShift)$) {\lbl};
        \node[anchor=center] at ($(4.6*\imShift, \j*\imShift)$) {\includegraphics[height=\imSz]{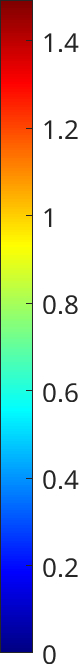}};
        }
    \end{tikzpicture}
    \caption{Selected time steps of the synthetic ground truth object.}
    \label{fig:simu}
\end{figure}

\textbf{Real data} is \texttt{stempo\_cont360\_2d\_b8.mat} from the STEMPO dataset~\cite{heikkila2022stempo,heikkila2022stempoData} which consists of 360 unique fan-beam projections measured at $1^\circ$ steps of a motorized dynamic tomography phantom. Detailed documentation is available in \cite{heikkila2022stempo}. With the chosen binning level (8), the reconstruction resolution is set to $280 \times 280$.

For the reconstructions, the sinogram is organized into just 32 batches, each with 19 consecutive projections and overlap of 11 projections, just like the simulated data. The forward operator is implemented using the ASTRA Toolbox.

\subsection{Results}

Reconstructions of the simulated data are shown in \cref{fig:all_recons_simu} and reconstructions of the STEMPO data are shown in \cref{fig:all_recons_stempo}. In \cref{fig:all_recons_simu} the relative $L^2$-error, SSIM~\cite{wang2004image} and HaarPSI \cite{reisenhofer2018haar} values for each individual time step are also shown.

With both data, all low-rank methods show similar movement artifacts and have particular difficulty reconstructing the translated object (e.g. object at the bottom center in \cref{fig:all_recons_simu}). However, with the chosen wavelet decomposition and block sizes, the MRLR regularization does not produce visible block-artifacts unlike the traditional LLR method. The relative $L^2$-error and HaarPSI metrics also favor MRLR whereas the SSIM values are consistently better for LLR. FTVNNR performs almost as well and the images have less noise thanks to the TV. However the images are also more blurry. This is especially noticeable with the STEMPO reconstructions. None of the methods reconstruct the intermediate time steps well and there is a distinct jump in the translations seen between time steps 12 and 22.

The L+S method worked surprisingly poorly with the tested data both visually and based on the numerical error metrics. Before the non-negativity projection was included in the implementation (without any mathematical rigor), its results were even poorer. Due to the two distinct regularization parameters (see \eqref{eq:L+S}), it is somewhat harder to fine tune than the other methods which might also affect the results. The MRLR also requires choosing the wavelet decomposition and different patch sizes but the changes have clearer effect on the behavior.

Although the MRLR should be slower to compute than the LLR, in practice the difference in computational times seems negligible. 

\begin{figure}[t]
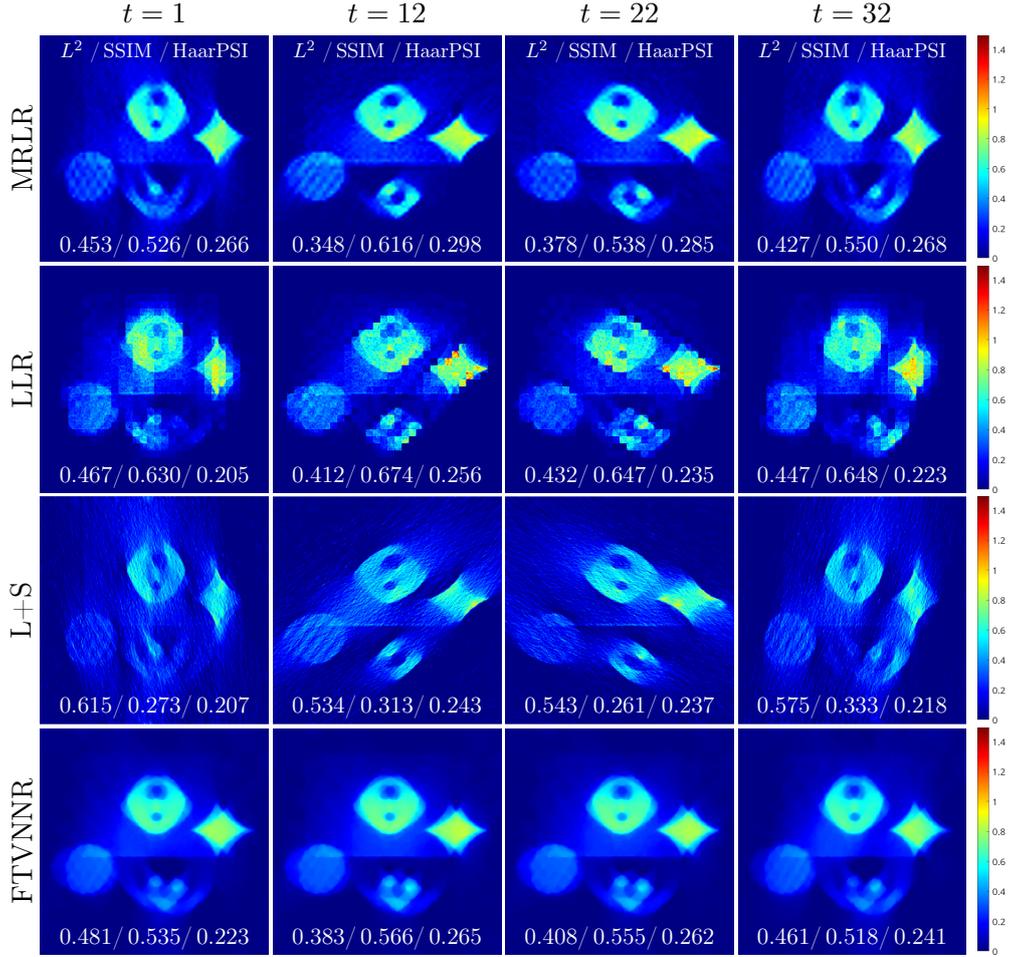


    \def\RElist{{
     {0.481,0.383,0.408,0.461}, 
     {0.615,0.534,0.543,0.575},
     {0.467,0.412,0.432,0.447},
     {0.453,0.348,0.378,0.427},
    }}
    \def\SSIMlist{{
     {0.535,0.566,0.555,0.518}, 
     {0.273,0.313,0.261,0.333},
     {0.630,0.674,0.647,0.648},
     {0.526,0.616,0.538,0.550},
    }}
    \def\HPSIlist{{
     {0.223,0.265,0.262,0.241}, 
     {0.207,0.243,0.237,0.218},
     {0.205,0.256,0.235,0.223},
     {0.266,0.298,0.285,0.268},
    }}
    \setlength{\imSz}{3cm}
    \setlength{\imShift}{1.02\imSz}
    \centering
    \begin{tikzpicture}
        \foreach \mtd/\lbl [count=\j] in {TVNN_recn/FTVNNR, L+S_recn/L+S, LLR_recn/LLR, LMRLR_recn/MRLR}{ 
        \foreach \t [count=\i, evaluate=\e using {\RElist[\j-1][\i-1]}, evaluate=\ss using {\SSIMlist[\j-1][\i-1]}, evaluate=\hpsi using {\HPSIlist[\j-1][\i-1]}] in {1, 12, 22, 32}{
            \node[anchor=center] (img) at ($(\i*\imShift, \j*\imShift)$) {\includegraphics[width=\imSz]{images/\mtd_t\t.png}};
            \node[color=white, scale=0.8]  at ($(\i*\imShift, \j*\imShift - 0.42*\imShift)$) {\e  /\! \ss /\! \hpsi};
            \if \j 4
                \node[color=white, scale=0.7]  at ($(\i*\imShift, \j*\imShift + 0.42*\imShift)$) {$L^2$  /\! SSIM /\! HaarPSI};
                \node[anchor=center, above=-1mm of img] {$t = \t$};
            \fi
        }
        \node[anchor=center, rotate=90] at ($(0.43*\imShift, \j*\imShift)$) {\lbl};
        \node[anchor=center] at ($(4.6*\imShift, \j*\imShift)$) {\includegraphics[height=\imSz]{images/colorbar.png}};
        }
    \end{tikzpicture}
    \caption{Simulated data reconstructions using different low rank methods: MultiResolution Low-Rank, Local Low-Rank, Low-rank + Sparse decomposition and Total Variation and Nuclear Norm regularization. The $L^2$-error, SSIM and HaarPSI values of each time step is also shown.}
    \label{fig:all_recons_simu}
\end{figure}

\begin{figure}[t!]
    \setlength{\imSz}{3cm}
    \setlength{\imShift}{1.02\imSz}
    \centering
    \begin{tikzpicture}
        \foreach \mtd/\lbl [count=\j] in {TVNN_recn/FTVNNR, L+S_recn/L+S, LLR_recn/LLR, LMRLR_recn/MRLR}{
        \foreach \t [count=\i] in {1, 12, 22, 32}{
            \node[anchor=center] (img) at ($(\i*\imShift, \j*\imShift)$) {\includegraphics[width=\imSz]{images/stempo_\mtd_t\t.png}};
            \if \j 4
                \node[anchor=center, above=-1mm of img] {$t = \t$};
            \fi
        }
        \node[anchor=center, rotate=90] at ($(0.43*\imShift, \j*\imShift)$) {\lbl};
        \node[anchor=center] at ($(4.6*\imShift, \j*\imShift)$) {\includegraphics[height=\imSz]{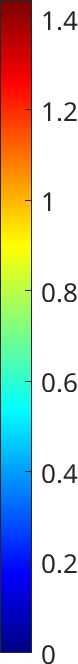}};
        }
    \end{tikzpicture}
    \caption{STEMPO data reconstructions using different low rank methods: MultiResolution Low-Rank, Local Low-Rank, Low-rank + Sparse decomposition and Total Variation and Nuclear Norm regularization.}
    \label{fig:all_recons_stempo}
\end{figure}

\section{Conclusions}
In this paper the multiresolution low-rank decomposition is formulated as a possible regularization strategy for reconstructing dynamic image sequences. As a combination of discrete wavelet transform and a local low-rank decomposition, it forms a unitary operator and should be easily applicable to many optimization strategies. The numerical examples indicate that heavy regularization produces less severe artifacts compared to traditional low-rank regularization methods. However, in practice it still has very similar regularizing effect on the reconstructions and should be applied in place of traditional low-rank methods or in tandem with some different prior. In the future it would be interesting to pair it with stronger spatial regularization such as total variation \cite{rudin1992nonlinear,kazantsev20154d,yao2018efficient}.

\subsubsection*{Acknowledgments} 
The author is grateful of the funding from the Vilho, Yrjö and Kalle Väisälä Foundation;
the Finnish Foundation for Technology Promotion;
Academy of Finland (the Centre of Excellence in Inverse Modelling and Imaging, decision 312339);
and Research Council of Finland (Flagship of Advanced Mathematics for Sensing Imaging and Modelling grant 359183).

\bibliographystyle{splncs04}
\bibliography{refs}
\end{document}